\newcommand{\N}{\mathbb{N}}
\newcommand{\Z}{\mathbb{Z}}
\newcommand{\R}{\mathbb{R}}
\newcommand{\conv}{\operatorname{conv}}
\newcommand{\floor}[1]{\left\lfloor#1\right\rfloor}
\newcommand{\ceil}[1]{\left\lceil#1\right\rceil}
\renewcommand{\epsilon}{\varepsilon}
\newtheorem{nn}{}[section]
\newtheorem{lemma}[nn]{Lemma}
\newtheorem{theorem}[nn]{Theorem}
\newtheorem{corollary}[nn]{Corollary}
\newtheorem{proposition}[nn]{Proposition}
\newtheorem{definition}[nn]{Definition}
\newtheorem{obs}[nn]{Observation}
\newtheorem{rem}[nn]{Remark}
\newtheorem{REMARK}[nn]{Remark}
\newenvironment{remark}{\begin{REMARK}}{\end{REMARK}}
\numberwithin{equation}{section}
\title{Scanning integer points with lex-inequalities: A finite cutting plane algorithm for integer programming with linear objective}%Grants or other notes
\author{Michele Conforti\footnote{Dipartimento di Matematica ``Tullio Levi-Civita'', Universit\`a degli Studi di Padova, Italy.} 
\and Marianna De Santis\footnote{Dipartimento di Ingegneria Informatica Automatica e Gestionale, Sapienza Universit\`a di Roma, Italy} 
\and Marco Di Summa\footnotemark[1] \and Francesco Rinaldi\footnotemark[1]}
\date{}
\begin{document}

\maketitle

\begin{abstract}  We consider the integer points in a unimodular cone $K$ ordered by a lexicographic rule defined by a lattice basis.
To each integer point $x$ in $K$ we associate a family of inequalities (lex-inequalities) that defines the convex hull of the integer points 
in $K$ that are not lexicographically smaller than $x$. 
The family of lex-inequalities contains the Chv\'atal--Gomory cuts, but
does not contain and is not contained in the family of split cuts. This provides a finite cutting plane method to solve the
integer program $\min \{cx: x\in S\cap \Z^n\}$, where $S\subset \R^n$ is a compact set and $c\in \Z^n$.
We analyze the number of iterations of our algorithm.
\end{abstract}

\section{Introduction}
The area of integer nonlinear programming is rich in applications but quite challenging from a computational point of view.
We refer to the articles~\cite{belotti2013mixed,burer-letchford} for comprehensive surveys on these topics.
The tools that are mainly used are sophisticated techniques that exploit relaxations, 
constraint enforcement (e.g.,~cutting planes) and convexification of the feasible set.
Reformulations in an extended space and cutting planes for integer nonlinear programs 
have been investigated and proposed for some time, see e.g.~\cite{ceria1999convex,frangioni2006,stubbs:1999}.
This line of research mostly provides a nontrivial extension of the theory of disjunctive programming 
to the nonlinear case.  To the best of our knowledge, these
results are obtained under some restrictive conditions: typically, the feasible set is assumed to be convex 
or to contain 0/1 points only (these  cases cover some important areas of application).
\smallskip

In this paper we present a finite cutting plane algorithm for  problems of the
form
\begin{equation}\label{eq:problem}
\min \{cx: x\in S\cap \Z^n\},
\end{equation}
where $S$ is a compact subset of  $\R^n$ (not necessarily convex or connected) and $c\in \Z^n$.
Our algorithm uses a new family of cutting planes which %exploit a lexicographic ordering of the integer points in $S$and%
 do not make any use of a description of the set $S$.
The cutting planes employed in our algorithm are obtained as follows.
We consider the integer points in a unimodular cone $K$, ordered by a lexicographic rule, associated with a lattice basis. 
To each integer point $x$ in $K$, we associate a family of inequalities (lex-inequalities) that, 
in a sense, is best possible, as it defines the convex hull of the integer points in $K$ that are not lexicographically 
smaller than $x$. Our family of cuts includes the Chv\'atal--Gomory cuts, but
it does not contain, nor is it contained in, the family of split cuts.

Our algorithm recursively solves optimization problems of the form $\min\{cx:x\in S\cap P\}$, 
where $P$ is a polyhedron, and we assume that an algorithm for problems of this type is available as a black box.
We remark that as long as this black box is available, no assumption on $S$ other than compactness is required
by our algorithm. To the best of our knowledge, this is in contrast to the rest of the literature where convexity
(or even polyhedrality) is a common assumption.

\smallskip

The cuts we introduce are {\em linear} inequalites. As the convex hull of the integer points in a bounded subset of $\R^n$ is a polytope,
a finite number of linear inequalities suffices for its characterization, and only $n$ such inequalities determine an optimal point.

Furthermore a finite number of linear inequalities suffices to describe some relevant relaxations of the convex hull of the integer points in a 
bounded set: most notably, Dadush, Dey and Vielma~\cite{dadush:2014} proved 
that the Chv\'atal--Gomory closure of a compact convex set is a polytope (whereas this is not the case for the split closure of the set).

However, nonlinear inequalities have also been successfully used to provide elegant convex hull characterizations.
We mention the work by Andersen and Jensen~\cite{andersen2013intersection} where a formula to describe the convex hull of a split 
disjunction applied to a second-order cone is provided. Their work is related to the paper by Modaresi et al.\ \cite{modaresi2016intersection},
where the authors derive split cuts for convex sets described by a single conic quadratic inequality and extend general intersection cuts 
to a wide variety of quadratic sets. Belotti et al.~\cite{belotti2013families,belotti:2017} introduce the so called disjunctive 
conic cuts studying families of quadratic surfaces intersected with two given hyperplanes. Burer and K{\i}l{\i}n\c{c}-Karzan~\cite{burer:2017}
extend the works cited above and show that the convex hull of the intersection 
of a second-order-cone representable set and a single homogeneous quadratic inequality can
be described by adding a single nonlinear inequality, defining an additional second-order-cone representable set.

%\medskip
From an algorithmic perspective, deriving a finite cutting plane procedure that uses a well defined family of inequalities does not seem to be 
straightforward. The oldest and 
most notable example is Gomory's finite cutting plane algorithm for integer linear 
programming over bounded sets based on fractional cuts~\cite{gomory:1958,gomory:1963}.
%In those algorithms, as well as in the method proposed here, crucial to the detection of a cutting plane is the computation of a lexicographically optimal solution.
Other finite cutting plane algorithms (again for bounded sets) can be found 
in~\cite{armstrong1979page,bell1973cutting,bowman1970finiteness,he2017another} for integer linear 
programming and in~\cite{Lee2017} for mixed integer linear programming.

While in all the papers cited above the correctness of the algorithms is based on a specific procedure for solving the continuous relaxation, 
there are methods 
that only assume that an optimal solution of the continuous relaxation is given by a black box. This is the case for the lift-and-project method 
of Balas, Ceria 
and Cornu\'ejols~\cite{balas1993lift} for mixed 0/1 linear problems, the procedure described by Orlin~\cite{orlin1985finitely} for 0/1 integer 
linear programming,
and the algorithm presented by Neto~\cite{neto2012simple} for integer linear programming over bounded sets.

The family of cuts used in Neto's algorithm is related to ours. As it will be clarified later, the inequalities introduced in~\cite{neto2012simple}
are weaker than the lex-inequalities and, in particular, they are derived under the assumption that a box containing the set $S$ is known.

We notice that a common feature of the above papers is the (explicit or implicit) use of some lexicographic rule for the choice of an optimal solution of the 
continuous relaxation or the selection of the cut. This seems to be a key tool to prove finite convergence of this type of algorithms.

\medskip

%Finite algorithms that use cutting planes based on a lexicographic ordering have been described 
%by Orlin \cite{orlin1985finitely} for 0/1 integer linear programming and Neto \cite{neto2012simple} for integer linear programming over bounded sets.

The paper is organized as follows.
In Section~\ref{sec:mainres}, we introduce the lex-inequalities with their properties. 
In Section~\ref{sec:ip}, we present the cutting plane algorithm  and we show that it terminates in a finite number of iterations.
In Section~\ref{sec:exp}, an instance where the algorithm stops after an exponential number of iterations is provided. 
Furthermore, we compare the performance of our algorithm with a natural enumeration approach. 
A comparison with Chv\'atal--Gomory cuts and split cuts is presented in Section~\ref{sec:comp}.
Section~\ref{sec:conc} concludes the paper.

\section{Lexicographic orderings and lex-inequalities}\label{sec:mainres}

A {\em lattice basis} of $\Z^n$ is a set of $n$ linearly independent vectors $c^1,\dots,c^n\in\Z^n$ such that for every $v\in\Z^n$ 
we have that    $\lambda_1,\dots,\lambda_n\in\Z$ in
the unique expression $v=\sum_{i=1}^n\lambda_ic^i$.

The lex-inequalities that we introduce in this paper are defined for a given lattice basis of $\Z^n$. To simplify the presentation, 
we first work with the standard basis and then extend the results to general lattice bases.

We will use standard notions in the theory of polyhedra, for which we refer the reader to \cite{SchrijverBOOK}.

\subsection{Standard basis}

We consider the {\em lexicographic ordering $\prec$} associated with the standard basis 
$e^1,\dots,e^n$:  given $x^1,\,x^2\in \R^n$, $x^1\prec x^2$ if and only if $x^1\ne x^2$
and $x_i^1< x_i^2$, where $i$ is the smallest index for which $x_i^1\ne x_i^2$.
We use $\preceq$, $\succ$, $\succeq$ with the obvious meaning.

We consider the cone $K=\R^n_+=\{x\in \R^n:x_i\ge 0,\,i=1,\dots, n\}$.
%{\em We  assume throughout the paper that $\ell_i=0,\,i=1,\dots, n$.} (This is without loss of generality and simplifies notation and arguments.)
Given $\bar{x}\in K\cap \Z^n$, we define
\[Q(\bar{x}):=\conv\{x\in K\cap \Z^n: x\succeq \bar{x}\},\]
where ``conv'' denotes the convex hull operator.

%\begin{obs}
%Let  $\bar{x}\in K\setminus \Z^n$ and let $k$ be the smallest index such that $\bar{x}_k\not\in\Z$. Let  $x'$ be defined as follows:
%\[ x_j'=\bar{x}_j,j<k;\: x_k'=\ceil{\bar{x}_k};\: x_j'=0,j>k.\]
%Then
%\end{obs}

%By the above observation, in the study of $Q(\bar{x})$ we may assume $\bar{x}\in \Z^n$.

Given $\bar x\in K\setminus\{\mathbf0\}$, we define the {\em leading index} $\ell(\bar x)$ as the largest index $i$ such that $\bar{x}_i>0$.

\begin{lemma}\label{lem:extreme-pts}
Fix $\bar x\in K\cap\Z^n$. The set $Q(\bar x)$ is a full-dimensional polyhedron. Its vertices are precisely the following points $v^1,\dots,v^{\ell(\bar x)}$:
 for $k=1,\dots,\ell(\bar x)-1$, $v^k$ has entries
\begin{alignat*}4
v_i^k &= \bar{x}_i,&\quad&i=1,\dots,k-1\\
v_k^k &= \bar{x}_k+1\\
v_i^k &=0, &&i=k+1,\dots,n,
\end{alignat*}
and $v^{\ell(\bar x)}=\bar x$.
Furthermore, the recession cone of $Q(\bar x)$ is $K$.
\end{lemma}

%\begin{proof}
%Consider the program
%\begin{equation}\label{eq:program}
%\min\{gx: x\in Q(\bar{x})\},
%\end{equation}
%where $g\in\R^n$.
%It is immediate to see that the value of \eqref{eq:program} is finite if and only if $g\ge 0$. Hence the recession cone of $Q(\bar x)$ is contained in $K$. Since the vectors of the standard basis $e_1,\dots,e_n$ are clearly contained in the recession cone of $Q(\bar x)$, we conclude that the recession cone of $Q(\bar x)$ is $K$.
%
%Assume that $g\ge 0$.
%Define $X:=\{x\in K\cap \Z^n:x\succeq\bar x\}$. We say that a point $x\in X$ is minimal if there is no $y\in X\setminus\{x\}$ such that $y\le x$ (where this notation does not indicate the lexicographic ordering, but means that all entries of $y$ are at most as large as those of $x$).
%Since $X$ is a set of nonnegative integer points, by Dickson's lemma \cite[Lemma A]{dickson1913finiteness} $X$ has finitely many minimal points. As $g\ge0$, this implies that the minimum of $gx$ over $X$ exists and is one of the minimal points, say $x^*$. Clearly $x^*$ is also an optimal solution of \eqref{eq:program}. Assume $x^*\ne \bar{x}$ and let $k$ be the lowest index such that $x^*_k> \bar{x}_k$. Since  $g\ge 0$ and $x^*\in \Z^n$ is an optimal solution of \eqref{eq:program}, we have that  $x^*_k= \bar{x}_k+1$  and $x^*_j=0$, $j=k+1,\dots, n$. This proves the characterization of the extreme points.
%
%Since the number of extreme points is finite and $K$ is a full-dimensional polyhedral cone,  $Q(\bar x)$ is a full-dimensional polyhedron.
%\end{proof}

\begin{proof}
Define $X:=\{x\in K\cap \Z^n:x\succeq\bar x\}$ and $X_i=v^i+(K\cap\Z^n)$ for every $i\in\{1,\dots,\ell(\bar x)\}$.
Note that $X=\bigcup_{i=1}^{\ell(\bar x)}X_i$ and therefore $\conv(X) =\conv\left(\bigcup_{i=1}^{\ell(\bar x)}\conv(X_i)\right)$.
As any rational polyhedral cone is an integral polyhedron, we have $\conv(K\cap\Z^n)=K$. 
As $v^i\in\Z^n$, this implies $\conv(X_i) = v^i +K$ for every $i$,
and thus these sets are integer translates of $K$.
Therefore $\conv(X)=\conv\{v^1,\dots,v^{\ell(\bar x)}\}+K$. This shows that $\conv(X)$ is 
a full-dimensional polyhedron with recession cone $K$ and its vertices 
are contained in $\{v^1,\dots,v^{\ell(\bar x)}\}$. It is easy to verify that $v^1,\dots,v^{\ell(\bar x)}$ are 
actually all vertices of $\conv(X)$.
\end{proof}

Let $\bar {x}\in K$ be given. For every $k\in\{1,\dots,n\}$ and $i\in\{1,\dots,k\}$ we define
\[d^k_i:=\begin{cases}
1 & \mbox{if $i=k$}\\
\bar{x}_k & \mbox{if $i=k-1$},\\
\bar{x}_k\prod_{j=i+1}^{k-1}(\bar{x}_j+1),&\mbox{if } i\le k-2.
\end{cases}\]
(Note that the $d^k_i$'s depend on the choice of $\bar x$, but we omit the dependence on $\bar x$ 
to keep notation simpler: this will never generate any ambiguity.)

For every $k\in\{1,\dots,n\}$, the {\em $k$-th lex-inequality} associated with $\bar {x}$ is the inequality
\begin{equation}\label{eq:lex-ineq}
\sum_{i=1}^{k}d^k_i x_i\ge \sum_{i=1}^kd^k_i\bar{x}_i.
\end{equation}
Note that when $\bar x_k=0$, \eqref{eq:lex-ineq} is the inequality $x_k\ge 0$.

\begin{theorem}
  \label{thm:convex-hull}
If $\bar x\in K\cap\Z^n$, then the lex-inequalities \eqref{eq:lex-ineq} for $k=1,\dots,n$
 and the inequalities $x_i\ge0$ for $i=1,\dots,n$ provide a description of the polyhedron $Q(\bar x)$.
\end{theorem}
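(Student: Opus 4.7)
Write $P$ for the polyhedron defined by the lex-cuts and the nonnegativity constraints. The inclusion $Q(\bar x) \subseteq P$ is Lemma~\ref{le:lex-cut}, so the task is to prove the reverse containment $P \subseteq Q(\bar x)$. I would first reduce to the case $\ell(\bar x) = n$: for $k > \ell(\bar x)$ we have $\bar x_k = 0$, hence $d^k_i = 0$ for $i < k$ and the $k$-th lex-cut collapses to $x_k \ge 0$; meanwhile, by Lemma~\ref{lem:extreme-pts}, the extreme points $v^1, \dots, v^{\ell(\bar x)}$ all vanish on coordinates past $\ell(\bar x)$, and the recession cone of $Q(\bar x)$ is $\R^n_+$. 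So the problem factors, and henceforth I assume $\ell(\bar x) = n$, in particular $\bar x_n \ge 1$.

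Fix $x^* \in P$. By Lemma~\ref{lem:extreme-pts}, showing $x^* \in Q(\bar x)$ means finding nonnegative coefficients $\lambda_1,\dots,\lambda_n$ summing to $1$ and some $z \in \R^n_+$ with $x^* = \sum_k \lambda_k v^k + z$. Introduce $\mu_j := \sum_{k\ge j}\lambda_k$, so $\mu_1 = 1$, $\lambda_j = \mu_j - \mu_{j+1}$, and set $\mu_{n+1} := 0$. Reading off the $j$-th coordinate of $\sum_k \lambda_k v^k$ using the formulas for $v^k$, the existence of such a decomposition becomes the existence of a sequence $1 = \mu_1 \ge \mu_2 \ge \cdots \ge \mu_n \ge 0$ satisfying $\mu_{j+1} \ge (\bar x_j+1)\mu_j - x^*_j$ for $j < n$ and $\bar x_n \mu_n \le x^*_n$. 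I would produce such a sequence greedily by setting $\mu_1 := 1$ and $\mu_{j+1} := \max\{0,\,(\bar x_j+1)\mu_j - x^*_j\}$: the inequalities for $\mu_{j+1}$ then hold by construction, and both the monotonicity $\mu_{j+1} \le \mu_j$ and the final inequality $\bar x_n \mu_n \le x^*_n$ reduce to the single claim $\bar x_j \mu_j \le x^*_j$ for every $j$.

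The main step is to derive these $n$ inequalities from the lex-cuts. I would compare the clipped $\mu_j$ with the unclipped recursion $\nu_1 := 1$, $\nu_{j+1} := (\bar x_j+1)\nu_j - x^*_j$, which unwinds to $\nu_j = \prod_{k=1}^{j-1}(\bar x_k+1) - \sum_{i=1}^{j-1}\prod_{k=i+1}^{j-1}(\bar x_k+1)\, x^*_i$. Using the telescoping identity $1 + \sum_{i=1}^{j-1}\bar x_i \prod_{k=i+1}^{j-1}(\bar x_k+1) = \prod_{k=1}^{j-1}(\bar x_k+1)$, which is exactly the relation encoded in the coefficients $d^j_i$, one checks that the $j$-th lex-cut is equivalent to the inequality $\bar x_j \nu_j \le x^*_j$. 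Hence $x^* \in P$ yields $\bar x_j \nu_j \le x^*_j$ for every $j$. A short induction then shows that up to the first index $j^*$ at which clipping occurs we have $\mu_j = \nu_j \ge 0$, while $\mu_j = 0$ for all $j \ge j^*$ (because $x^*_k \ge 0$ keeps the clipped recursion pinned at $0$). In both cases $\bar x_j \mu_j \le x^*_j$ holds, which finishes the argument.

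The delicate point I anticipate is the telescoping identification of the $j$-th lex-cut with $\bar x_j \nu_j \le x^*_j$; this is where the specific definition of the coefficients $d^j_i$ is crucial. Once that identity is in hand, the remaining bookkeeping on the two recursions and the sign/monotonicity checks are routine.
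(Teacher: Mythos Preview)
Your argument is correct and takes a genuinely different route from the paper's. The paper proceeds indirectly: it shows by induction on $n$ that every facet-defining inequality of $Q(\bar x)$ is either a nonnegativity constraint or a lex-cut, using the observation that the projection of $Q(\bar x)$ onto the first $k$ coordinates equals $Q(\bar x_{[k]})$ together with two claims establishing that any facet with $a_n>0$ and $a_0>0$ must be tight at all vertices $v^1,\dots,v^n$, hence equal to the $n$-th lex-cut. Your proof is instead fully constructive: given $x^*\in P$ you manufacture the convex combination $x^*=\sum_k\lambda_kv^k+z$ explicitly via the clipped recursion $\mu_{j+1}=\max\{0,(\bar x_j+1)\mu_j-x^*_j\}$, and you identify the $j$-th lex-cut with the inequality $\bar x_j\nu_j\le x^*_j$ on the unclipped recursion through the telescoping identity $\prod_{k=1}^{j-1}(\bar x_k+1)=1+\sum_{i=1}^{j-1}\bar x_i\prod_{k=i+1}^{j-1}(\bar x_k+1)$. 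This avoids any appeal to facet theory or Fourier--Motzkin and gives a direct certificate of membership in $Q(\bar x)$; the paper's approach, by contrast, yields as a by-product information on which inequalities are actually facet-defining.
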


\begin{proof}
As $K$ is the recession cone of $Q(\bar{x})$ (Lemma \ref{lem:extreme-pts}) 
and $Q(\bar x)\subseteq K$, it follows that every facet inducing inequality for $Q(\bar{x})$ 
(indeed every valid inequality) is of the type
\begin{equation}\label{eq:facet}
  \sum_{i=1}^{n} a_ix_i\ge a_0
\end{equation}
where $a_i\ge 0, i=0,\dots,n$.

Given $k\in\{1,\dots,n\}$, we let $Q_k(\bar{x})\subseteq \R^k$ denote the orthogonal projection of $Q(\bar{x})$ onto the first $k$ variables, 
and we define $\bar{x}_{[k]}:=(\bar{x}_1, \dots, \bar{x}_k)$.
It follows from the definition of lexicographic ordering that $Q_k(\bar{x})=Q(\bar{x}_{[k]})$.

Therefore the facet inducing inequalities  of $Q_k(\bar{x})$ are the facet inducing inequalities
of $Q(\bar{x})$ such that $a_j=0$ for $j=k+1,\dots, n$. 
(This can be seen, e.g., as a consequence of the method of Fourier--Motzkin to compute projections.)

As  the theorem trivially holds for $Q_1(\bar{x})$, to prove the result by induction on $n$ it suffices to characterize the facets with $a_n>0$. 
As the only
facet inducing inequality with  $a_n>0$ and  $a_0=0$ is $x_n\ge 0$, from now on  we consider a facet inducing inequality
\eqref{eq:facet} with $a_n>0$ and $a_0>0$.

Assume first that $\bar{x}_n=0$.  Then by Lemma \ref{lem:extreme-pts} we have 
that $Q(\bar{x})=Q_{n-1}(\bar{x})\times \{x_n\in \R:x_n\ge 0\}$ and we are done by induction.
Therefore we assume $\bar{x}_n>0$. Recall that, by Lemma \ref{lem:extreme-pts}, $Q(\bar{x})$ has $n$ vertices, $v^1,\dots, v^n=\bar{x}$.
\smallskip

\noindent  {\bf Claim 1:} {\em $\bar{x}$ satisfies \eqref{eq:facet} at equality.}

\noindent Since $v^k_n=0$ for $k=1,\dots,n-1$, if $\bar{x}$ does not satisfy \eqref{eq:facet} at equality, the inequality
\begin{equation*}
\sum_{i=1}^{n-1} a_ix_i+(a_n-\varepsilon)x_n\ge a_0
\end{equation*}
is valid for $Q(\bar{x})$ for some  $\varepsilon >0$. Since \eqref{eq:facet} is the sum of $\varepsilon x_n\ge 0$ and 
the above inequality, and these 
inequalities are not multiples of each other as $a_0>0$,  \eqref{eq:facet} does not induce a facet of $Q(\bar{x})$. This proves Claim 1.
\smallskip

\noindent{\bf Claim 2:} {\em $a_k>0$ for $k=1,\dots,n$.}

 \noindent   By Claim 1 we have that
    \begin{equation*}
  \sum_{i=1}^{n} a_i\bar{x}_i= a_0.
\end{equation*}
Pick $k\in\{1,\dots,n-1\}$. Since $v^k$ satisfies \eqref{eq:facet}, we have that
  \begin{equation*}
  \sum_{i=1}^{k} a_i\bar{x}_i+a_k\ge a_0.
\end{equation*}
Subtracting the above equation from this inequality, we obtain
\begin{equation*}
  a_k\ge \sum_{i=k+1}^{n} a_i\bar{x}_i>0,
\end{equation*}
where the strict inequality follows because $a_i\ge0$ for $i=1,\dots,n$ and $a_n\bar{x}_n>0$.
This proves Claim 2.
\smallskip

Claim 2 shows that  if $x''\ne x'$, $x''\ge x'$ (componentwise) and $x'$ satisfies \eqref{eq:facet}, 
then $x''$ cannot satisfy \eqref{eq:facet} at equality.
In particular, if $x'$ satisfies \eqref{eq:facet} at equality and $r$ is a nonzero ray of $Q(\bar x)$ 
then $x'+r$ cannot satisfy \eqref{eq:facet} at equality.
Therefore, as $Q(\bar{x})$ is a full dimensional polyhedron and \eqref{eq:facet} induces a facet, 
this inequality must be satisfied
at equality by $v^1,\dots, v^n$.
By imposing these $n$ equations, it can be derived that, up to positive scaling, $a_i = d_i^n$, for all $i=1,\ldots,n$
and  $a_0 = \sum_{i=1}^nd^n_i\bar{x}_i$.
This implies that \eqref{eq:facet} is
\[\sum_{i=1}^{n}d^n_ix_i\ge \sum_{i=1}^nd^n_i\bar{x}_i\]
and the theorem is proven.
\end{proof}

\begin{remark}
In the description given by Theorem \ref{thm:convex-hull}, for every $k$ such that $\bar x_k=0$ the $k$-th lex-inequality is redundant, 
as it is the inequality $x_k\ge0$.
Furthermore, if $\bar x_1>0$ then also the inequality $x_1\ge0$ is redundant, 
as it is dominated by the first lex-inequality (which is $x_1\ge \bar x_1$). It can be verified 
that the remaining inequalities provide an irredundant description of $Q(\bar x)$.
\end{remark}

\subsection{General lattice bases}

Let $\{c^1,\dots,c^n\}$ be a lattice basis of $\Z^n$. Then the $n\times n$ matrix $C$ whose rows are $c^1,\dots,c^n$ is unimodular, 
i.e., it is an integer matrix
with determinant 1 or $-1$. The unimodular transformation $x\mapsto Cx$ and its inverse map integer points to integer points. 
By applying the transformation
$x\mapsto Cx$, the results of the previous subsection can be immediately extended to the lattice basis $\{c^1,\dots,c^n\}$.

In particular, the lexicographic ordering defined by the lattice basis is as follows: given $x^1,\,x^2\in \R^n$, 
we have $x^1\prec x^2$ if and only if $x_1\ne x_2$ 
and $c^ix^1< c^ix^2$, where $i$ is the smallest index for which $c^ix^1\ne c^ix^2$.

The unimodular cone $K$ is defined as $K:=\{x\in\R^n:c^ix\ge0,\,i=1,\dots,n\}$ and, 
for $\bar x\in K\cap\Z^n$, $Q(\bar x):=\conv\{x\in K\cap\Z^n:x\succeq\bar x\}$.

The leading index $\ell(\bar x)$, for $\bar x\in K\setminus\{\mathbf0\}$, is the largest index $i$ such that $c^i\bar{x}>0$.
Lemma \ref{lem:extreme-pts} now reads as follows:

\begin{lemma}\label{lem:extreme-pts-gen}
Fix $\bar x\in K\cap\Z^n$. The convex set $Q(\bar x)$ is a full-dimensional polyhedron. 
Its vertices are precisely the following points $v^1,\dots,v^{\ell(\bar x)}$:
 for $k=1,\dots,\ell(\bar x)-1$, $v^k$ is the unique point satisfying
\begin{alignat*}4
c^iv^k &= c^i\bar{x},&\quad&i=1,\dots,k-1\\
c^kv^k &= c^k\bar{x}+1\\
c^iv^k &=0, &&i=k+1,\dots,n,
\end{alignat*}
and $v^{\ell(\bar x)}=\bar x$.
Furthermore, the recession cone of $Q(\bar x)$ is $K$.
\end{lemma}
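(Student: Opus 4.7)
The plan is to deduce this statement from Lemma~\ref{lem:extreme-pts} by the unimodular change of variables $x\mapsto Cx$, exactly as suggested at the start of this subsection. Since $C$ is unimodular, the map $\varphi(x)=Cx$ is a bijection of $\Z^n$ onto itself; it is also a linear isomorphism of $\R^n$ that preserves full-dimensionality, maps affine extreme points to affine extreme points, and sends recession cones to recession cones.

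First I would observe that $\varphi$ carries $K$ onto $\R^n_+$, because $K=\{x:Cx\ge 0\}$. Next, by the very definition of the lex-ordering associated with the basis $\{c^1,\dots,c^n\}$, we have $x^1\prec x^2$ if and only if $\varphi(x^1)$ is strictly smaller than $\varphi(x^2)$ in the \emph{standard} lex-ordering on $\R^n$. Consequently, if we let $Q^{\mathrm{std}}(\bar y):=\conv\{y\in\R^n_+\cap\Z^n:y\succeq_{\mathrm{std}}\bar y\}$ denote the set built in the previous subsection, then
\[
\varphi\bigl(Q(\bar x)\bigr)=Q^{\mathrm{std}}(C\bar x).
\]
Moreover the largest index $i$ with $(C\bar x)_i>0$ is precisely $\ell(\bar x)$, so the leading indices in the two settings coincide.

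Applying Lemma~\ref{lem:extreme-pts} to $Q^{\mathrm{std}}(C\bar x)$, I obtain its $\ell(\bar x)$ extreme points $w^1,\dots,w^{\ell(\bar x)}$ with the entry pattern recalled in that lemma, and the fact that its recession cone is $\R^n_+$ and that it is full-dimensional. Pulling back by $\varphi^{-1}$, the extreme points of $Q(\bar x)$ are exactly $v^k:=C^{-1}w^k$, i.e.\ the unique solutions of $c^iv^k=w^k_i$ for $i=1,\dots,n$, which after substituting the values of $w^k_i$ yield precisely the three-part description in the statement. For $k=\ell(\bar x)$ this gives $v^{\ell(\bar x)}=C^{-1}(C\bar x)=\bar x$. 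The recession cone of $Q(\bar x)$ equals $\varphi^{-1}(\R^n_+)=K$, and full-dimensionality is preserved by the linear isomorphism $\varphi^{-1}$.

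There is essentially no obstacle: the only point that deserves a careful line is checking that the basis-induced lex-ordering on $\R^n$ corresponds, under $\varphi$, to the standard lex-ordering on the image — this is immediate from the definition given just above the lemma, but it is what legitimizes transferring every conclusion of Lemma~\ref{lem:extreme-pts} to the present setting.
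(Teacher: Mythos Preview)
Your proposal is correct and follows exactly the approach the paper takes: the paper does not give a separate proof of this lemma but simply states that the results of the previous subsection extend to a general lattice basis via the unimodular transformation $x\mapsto Cx$. Your write-up makes this reduction explicit and checks the relevant compatibilities (lex-orderings, leading indices, extreme points, recession cones, full-dimensionality), which is precisely what is needed.
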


For $\bar {x}\in K$, $k\in\{1,\dots,n\}$ and $i\in\{1,\dots,k\}$, the definition of the $d^k_i$'s is as follows:
\begin{equation}\label{eq:d}
d^k_i:=\begin{cases}
1 & \mbox{if $i=k$}\\
c^k\bar{x} & \mbox{if $i=k-1$},\\
c^k\bar{x}\prod_{j=i+1}^{k-1}(c^j\bar{x}+1),&\mbox{if $i\le k-2$}.
\end{cases}
\end{equation}
The {\em $k$-th lex-inequality} associated with $\bar {x}$ is the following:
\begin{equation}\label{eq:lex-cut-gen}
\sum_{i=1}^{k}d^k_i c^ix\ge \sum_{i=1}^kd^k_ic^i\bar{x}.
\end{equation}

Theorem \ref{thm:convex-hull} now reads as follows:

\begin{proposition}\label{prop:valIn}
If $\bar x\in K\cap\Z^n$, then the lex-inequalities \eqref{eq:lex-cut-gen}
for $k=1,\dots,n$ and the inequalities $c^ix\ge0$ for $i=1,\dots,n$ provide a description of the polyhedron $Q(\bar x)$.
\end{proposition}

Neto \cite{neto2012simple} describes a family of inequalities that, although presented in a different setting, 
can be seen to be valid for $Q(\bar x)$ when the lattice basis
$\{c^1,\dots,c^n\}$ is the standard basis. However, those inequalities in general do not 
induce facets of $Q(\bar x)$ and are therefore weaker than the lex-inequalities.
In particular, the inequalities in \cite{neto2012simple} are derived under the assumption that a box containing the continuous 
set $S$ is known, and their coefficients depend on the size of the box. 
In contrast, our inequalities only depend on the current fractional solution $\bar x$. As a consequence, we obtain inequalities
with smaller dynamism (i.e., with smaller ratio between the largest and the smallest absolute value of the coefficients), 
which is a desirable property in practice.

In order to compare Neto's inequalities with ours, let $n=2$, $\bar x=(1,1)$, and consider the box $[0,3]\times[0,3]$. Neto's inequalities are in this case $x_1\ge 1$ and $3x_1+x_2\ge 4$,\footnote{We note that Neto presents his inequalities in a different form, as he considers the integer points that are lexicographically {\em smaller} than $\bar x$.} while the lex-inequalities are $x_1\ge 1$ and $x_1+x_2\ge 2$. Since the inequality $3x_1+x_2\ge 4$ is a proper conic combination of the two lex-inequalities, it cannot be facet inducing for $Q(\bar x)$.

It should also be noted that in \cite{neto2012simple} the inequalities are described only for the case in which the continuous 
set $S$ is  a bounded polyhedron, although it is 
not difficult to extend them to the case of a compact set.

Furthermore, we remark that a linear-inequality description of the set $Q(\bar x)$ can be inferred from a result proved by 
Gupte \cite[Theorem 2]{gupte2016} in the context
of super-increasing knapsack problems: one needs to apply a change of variables and observe that by removing the lower 
bounds appearing in \cite{gupte2016} the
remaining facet inducing inequalities are unaffected.

\section{The cutting plane algorithm}\label{sec:ip}

 Let $\mathcal S$ be a family of compact (not necessarily connected or convex) subsets of $\R^n$ with the following property:

{\centerline{\em if $S\in\mathcal S$ and $H$ is a closed halfspace in $\R^n$, then $S\cap H\in\mathcal S$.}}

{\em Linear optimization} over $\mathcal S$ is the following problem:
 given  $S\in\mathcal S$ and  $c\in\Z^n$, determine an optimal solution to the problem $\min\{cx:x\in S\}$ or certify that $S=\emptyset$.
(Since $S$ is compact, either $S=\emptyset$ or the minimum is well defined.)

{\em Integer linear optimization} over $\mathcal S$ is defined similarly, but the feasible
region is $S\cap \Z^n$, the set of integer points in $S$.
\smallskip

We prove that an oracle for solving linear optimization over $\mathcal S$ suffices to design a finite cutting plane 
algorithm that solves integer linear optimization over $\mathcal S$.
\medskip

We now make this statement more precise.
Given a compact subset $S$ of $\R^n$ and $c\in \Z^n$, let $ \bar x\in S$ be an optimal solution of the program $\min\{cx:x\in S\}$. A {\em cutting plane} 
is a linear inequality that is  valid for $S\cap \Z^n$ and is violated by $\bar x$. Note that a cutting plane exists if and only if $\bar x\notin\conv(S\cap\Z^n)$. 
In particular, this is certainly the case if $\bar x$ is a non-integral extreme point of $\conv(S)$. 

A (pure) {\em cutting plane algorithm} for integer linear optimization over $\mathcal S$ is an iterative procedure of the following type:
\begin{itemize}
\item[-]Let  $S\in\mathcal S$ and $c\in\Z^n$ be given.
\item[-]If $S=\emptyset$, then $S\cap \Z^n=\emptyset$. Otherwise, find an optimal solution $\bar x$ of $\min\{cx:x\in S\}$.
\item[-]If $\bar x\in S\cap \Z^n$, stop: $\bar x$ is an optimal solution to $\min\{cx:x\in S\cap\Z^n\}$. Otherwise, detect a  cutting plane and let $H$ denote the corresponding half-space.
Replace $S$ with $S\cap H$ and iterate.
\end{itemize}

Assume without loss of generality that the objective function vector $c$ is nonzero and has relatively prime entries. Then there exists a lattice basis $\{c^1,\dots,c^n\}$ of $\Z^n$ such 
that $c^1=c$. The optimal solution $\bar x$ of $\min\{cx:x\in S\}$ found by our algorithm will be a {\em lexicographically  minimum} or {\em lex-min} solution in $S$ with 
respect to the lattice basis: i.e., $\bar{x}\prec x$ for every $x\in S\setminus \{\bar{x}\}$. The lex-min vector $\bar{x}$ in $S$ satisfies the following conditions:
\begin{itemize}
\item $c^1\bar x=\min\{c^1x:x\in S\}$;
\item $c^2\bar x=\min\{c^2x:x\in S,\, c^1x=c^1\bar {x}\}$;
\item $c^3\bar x=\min\{c^3x:x\in S,\, c^1x=c^1\bar {x},\, c^2x=c^2\bar {x}\}$;
\item \dots
\item $c^n\bar x=\min\{c^nx:x\in S,\, c^1x=c^1\bar {x},\dots,\, c^{n-1}x=c^{n-1}\bar {x}\}$.
\end{itemize}
Since  $S$ is nonempty  and compact, the above minima are well-defined and can be computed by applying the oracle $n$ times. 
Furthermore these conditions uniquely define $\bar x$. 
One verifies that $\bar x$ is an extreme point of $\conv(S)$.

\begin{algorithm}[!ht]
\label{alg:weak}
\caption{Resolution of integer linear optimization over $\mathcal S$}
\SetKwInput{Input}{Input}
\SetKwInput{Output}{Output}
  \Input{$S\in\mathcal S$ with $S\ne\emptyset$, $c\in\Z^n\setminus\{\mathbf0\}$ with relatively prime entries, and a lattice basis $\{c^1,\dots,c^n\}$ of $\Z^n$ with $c^1=c$.}
	\Output{an optimal integer solution $\bar x$ for the problem $\min\{cx:x\in S\}$  or a certificate that $S\cap \Z^n=\emptyset$.}
	Compute  $\ell^*_i:=\min\{c^ix:x\in S\}$ and $\ell_i:=\ceil {\ell^*_i}$ for $1\le i\le n$, and apply a translation so that $\ell_i=0$ for $1\le i \le n$. 
	Let $K:=\{x\in \R^n:c^ix\ge 0,\,i=1,\dots, n\}$ and replace $S$ with  $S\cap K$.\label{step:preprocessing}\\
	If $S=\emptyset$, stop: the given problem is infeasible.\label{step:empty}\\
	Else, compute the lex-min solution $\bar x$ in $S$ with respect to $\{c^1,\dots,c^n\}$.\label{step:lex-min}\\
	If $\bar x\in\Z^n$, return $\bar x$.\\
	Else, let $k$ be the smallest index such that $c^k\bar x\notin\Z$\label{step:k} and compute
	\[d^k_i:=\begin{cases}
1 & \mbox{if $i=k$}\\
\ceil{c^k\bar{x}} & \mbox{if $i=k-1$},\\
\ceil{c^k\bar{x}}\prod_{j=i+1}^{k-1}(c^j\bar{x}+1),&\mbox{if $i\le k-2$}.
\end{cases}
\]
	Replace $S$ with $S\cap H$, where $H$ is the halfspace defined by the inequality \eqref{eq:cut}
  \begin{equation*}
    \sum_{i=1}^{k} d_i^kc^ix\ge  \sum_{i=1}^{k-1} d_i^kc^i\bar x + d_k^k\ceil{c^k\bar x}
  \end{equation*}
  and go to step \ref{step:empty}. \label{step:cut}
\end{algorithm}

Algorithm \ref{alg:weak} describes the procedure in detail.
%, in particular how to find  $\bar x$ and a cutting plane whenever $\bar x\not\in S\cap \Z^n$.
Note that since $S$ is compact,  numbers $\ell^*_1,\dots,\ell^*_n$ (as defined in Algorithm \ref{alg:weak}) 
exist  and can be determined by querying the linear optimization oracle $n$ times.
Moreover, as $\{c^1,\dots,c^n\}$ is a lattice basis of $\Z^n$, an index $k$ as in step \ref{step:k} always exists when $\bar x\notin\Z^n$.

Given $x\in K$, let $x^{\uparrow}$ be the lex-min vector in $K\cap \Z^n$ such that $x\preceq x^{\uparrow}$. Obviously $x=x^{\uparrow}$  
if and only if $x\in \Z^n$.
If $x\not\in \Z^n$, let $k$ be the smallest index such that $c^k x\not\in\Z$.  It is easy to see that  $x^{\uparrow}$
is the unique point satisfying the following conditions:
 \begin{equation}\label{eq:uparrow}
 c^ix^{\uparrow}=c^i x,\,i<k;\quad c^kx^{\uparrow}=\ceil{c^k x};\quad
 c^ix^{\uparrow}=0,\,i>k.
 \end{equation}
 
 \begin{definition}\label{def:lexcut}
  Let $\bar x\not\in S\cap \Z^n$ and let $k$ be the smallest index such that $c^k \bar x\not\in\Z$.
  %Let $\bar x^{\uparrow}$ be the lex-min vector in $K\cap \Z^n$ 
  %such that $\bar x\preceq \bar x^{\uparrow}$.
  The \emph{$k$-th lex-cut} is the $k$-th 
  lex-inequality  associated with $\bar x^{\uparrow}$:
 \begin{equation}\label{eq:cut}
    \sum_{i=1}^{k} d_i^kc^ix\ge  \sum_{i=1}^{k-1} d_i^kc^i\bar x + d_k^k\ceil{c^k\bar x}
  \end{equation}
	(This is the cut introduced at step \ref{step:cut} of Algorithm \ref{alg:weak}.)  
 \end{definition}

\begin{proposition}\label{prop:cut}
Inequality \eqref{eq:cut} defines a cutting plane. Algorithm \ref{alg:weak} terminates after a finite number of iterations.
\end{proposition}

\begin{proof}
%By \eqref{eq:uparrow}, inequality \eqref{eq:cut} is the $k$-th lex-cut \eqref{eq:lex-cut-gen} associated with $\bar{x}^{\uparrow}$. 
Since, after the preprocessing of 
step \ref{step:preprocessing}, $S\subseteq K$ and $\bar{x}$ is the lex-min point in $S$, $\bar{x}\preceq \bar{x}^{\uparrow}\prec x'$ for 
every $x'\in S\cap \Z^n\setminus\{\bar{x}^{\uparrow}\}$. 
Thus $S\cap \Z^n\subseteq Q(\bar{x}^{\uparrow})$ and by Proposition \ref{prop:valIn} inequality \eqref{eq:cut} is valid for $S\cap \Z^n$.
As $c^k\bar{x}\not\in \Z$ and $d^k_k>0$, 
the inequality is violated by $\bar{x}$. This shows that \eqref{eq:cut} defines a cutting plane.

As different iterations of the algorithm use cuts \eqref{eq:cut} associated with lexicographically increasing vectors 
 in  $S\cap \Z^n$, and $S$ is bounded, the number of
iterations of the algorithm is finite.
 \end{proof}

\iffalse
\begin{rem}\label{rem:cuts}
Inequality \eqref{eq:cut} is of the form $\sum_{i=1}^{k} d_ic^ix\ge \delta$ for some $k\in\{1,\dots, n\}$ and $\delta\in\R$. Therefore,  Algorithm \ref{alg:weak} produces cuts with only $n$ predetermined normal vectors. In particular, if $c^1,\dots,c^n$ are nonnegative vectors, as e.g.\ in the standard basis, all cuts have nonnegative coefficients.
\end{rem}

\begin{rem}
We remark that the above algorithm requires an oracle that solves the continuous problem {\em exactly} and not just with arbitrary accuracy. 
(This is in theory the case for {\em any}  cutting plane algorithm). However, from the above proofs we can observe that the assumption that the oracle finds an optimal solution for the continuous problem can be weakened.
Indeed, it is sufficient that for any $c\in\Z^n$ the oracle finds a feasible solution $\bar x$ such that $\ceil{c\bar x}=\ceil{z^*}$,
where $z^*$ is the optimal value of the continuous problem. However,  when $z^*$ is integer this means that the oracle has to find an
optimal solution.
\end{rem}
\fi

We mention that Akshay Gupte (personal communication) has elaborated an algorithm to solve $\min\{cx : x\in S\}$, assuming that a 
box $B$ containing $S$ is given. His algorithm iteratively constructs the convex hull of  a set of the 
form $\{x\in B\cap\Z^n : x\succeq \hat x\}$ for some $\hat x\in \Z^n$, which can be seen as a truncated version of $Q(\hat x)$. 
However, while in Algorithm \ref{alg:weak} at each iteration we use $\hat x=\bar x^\uparrow$, where $\bar x$ is 
the optimal solution of the continuous relaxation, in Gupte's algorithm $\hat x$ is obtained by ``rounding'' the point 
optimizing an objective function with superincreasing coefficients that is different from the original objective function.
As a consequence, in Gupte's algorithm one can have $\hat x \prec \bar x^\uparrow$, which makes $Q(\hat x)$ (or its truncated version) weaker.

\section{Lexicographic enumeration and the number of iterations}\label{sec:exp}

Recall the notation $x^\uparrow$ introduced in \eqref{eq:uparrow}. We extend that definition to sets as follows:
given $S\subseteq \R^n$, let $S^{\uparrow}:=\{x^{\uparrow}:x\in S\}$.
Since $S$ is bounded, $S^\uparrow$ is a finite set, as, given $y\in S^\uparrow$ and $i\in\{1,\dots,n\}$, $c^iy$ is an integer value satisfying 
$\min\{c^ix:x\in S\}\le c^iy\le\ceil{\max\{c^ix:x\in S\}}$.

\begin{obs}\label{obs:performance}
Given a nonempty set  $S\in \mathcal S$, let $(\bar{x})$ be the sequence of points in $S$ computed at step \ref{step:lex-min} of 
Algorithm \ref{alg:weak}. 
Then the sequence $(\bar{x}^{\uparrow})$ is the lex-ordering  of some distinct points in  $S^{\uparrow}$.
\end{obs}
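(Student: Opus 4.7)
The plan is to establish three facts about the sequence $\bar x^{(1)},\bar x^{(2)},\dots$ of lex-min points computed at step \ref{step:lex-min}: that each $\bar x^{(t),\uparrow}$ lies in $S^{\uparrow}$; that the derived sequence $(\bar x^{(t),\uparrow})$ is strictly lex-increasing; and hence that its entries are pairwise distinct.

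Membership in $S^{\uparrow}$ is immediate: at iteration $t$ the current set $S^{(t)}$ is obtained from $S$ by intersecting with the halfspaces of previously added lex-cuts, so $\bar x^{(t)}\in S^{(t)}\subseteq S$, and hence $\bar x^{(t),\uparrow}\in S^{\uparrow}$ by definition of $S^{\uparrow}$.

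The heart of the argument is to show $\bar x^{(t+1)}\succeq \bar x^{(t),\uparrow}$ in the lex ordering. Let $k$ be the index chosen at step \ref{step:k} of iteration $t$; by \eqref{eq:uparrow}, $\bar x^{(t),\uparrow}$ is the unique integer point with $c^i\bar x^{(t),\uparrow}=c^i\bar x^{(t)}$ for $i<k$, $c^k\bar x^{(t),\uparrow}=\ceil{c^k\bar x^{(t)}}$, and $c^i\bar x^{(t),\uparrow}=0$ for $i>k$. Because $\bar x^{(t+1)}\in S^{(t+1)}$, it satisfies the cut \eqref{eq:cut}. I would argue by a short dichotomy. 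If $c^i\bar x^{(t+1)}=c^i\bar x^{(t)}$ for all $i<k$, then substituting these equalities into \eqref{eq:cut} cancels the $i<k$ terms and, using $d^k_k=1$, collapses the inequality to $c^k\bar x^{(t+1)}\ge \ceil{c^k\bar x^{(t)}}=c^k\bar x^{(t),\uparrow}$, giving $\bar x^{(t+1)}\succeq\bar x^{(t),\uparrow}$. Otherwise, at the smallest $i<k$ with $c^i\bar x^{(t+1)}\neq c^i\bar x^{(t)}$, the lex-minimality of $\bar x^{(t)}$ in $S^{(t)}$ combined with $\bar x^{(t+1)}\succ \bar x^{(t)}$ (the cut strictly separates $\bar x^{(t)}$ by Proposition \ref{prop:cut}) forces $c^i\bar x^{(t+1)}>c^i\bar x^{(t)}=c^i\bar x^{(t),\uparrow}$, hence $\bar x^{(t+1)}\succ\bar x^{(t),\uparrow}$.

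The operator $\uparrow$ is monotone with respect to $\preceq$ (any integer $\succeq y$ is automatically $\succeq x$ when $x\preceq y$), so applying it yields $\bar x^{(t+1),\uparrow}\succeq (\bar x^{(t),\uparrow})^{\uparrow}=\bar x^{(t),\uparrow}$, which is the desired lex-ordering. Strictness, and thus distinctness, holds in every case except the degenerate $\bar x^{(t+1)}=\bar x^{(t),\uparrow}$, which would force $\bar x^{(t+1)}$ to be integer and trigger termination at iteration $t+1$, so no subsequent iteration exists to produce a repetition. The only place where I expect to need even a little care is the algebraic simplification in the first case of the dichotomy, which is a one-line calculation once the coefficients $d^k_i$ of \eqref{eq:d} are written out; the rest of the argument is bookkeeping on top of the lex-min structure already used in Lemma \ref{le:lex-cut} and Proposition \ref{prop:cut}.
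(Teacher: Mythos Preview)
Your argument is correct. You establish the stronger intermediate fact $\bar x^{(t+1)}\succeq\bar x^{(t),\uparrow}$ by a direct case analysis on the cut inequality, and then use monotonicity of $\uparrow$ together with the observation that equality forces $\bar x^{(t+1)}$ to be integer (hence terminal). All steps check out; in particular, in your first case the conclusion $\bar x^{(t+1)}\succeq\bar x^{(t),\uparrow}$ uses that $c^i\bar x^{(t),\uparrow}=0$ for $i>k$ while $\bar x^{(t+1)}\in K$, which is fine even if you did not spell it out.

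The paper takes a shorter, indirect route: it observes (implicitly via monotonicity) that $\bar x^{\uparrow}\preceq\tilde x^{\uparrow}$, and rules out equality by noting that the cut produced at an iteration depends only on $\bar x^{\uparrow}$ (indeed the index $k$ equals the leading index $\ell(\bar x^{\uparrow})$), so $\bar x^{\uparrow}=\tilde x^{\uparrow}$ would make the two cuts identical, contradicting that $\tilde x$ survives the first one. Your approach is more explicit and yields the useful extra information $\bar x^{(t+1)}\succeq\bar x^{(t),\uparrow}$; the paper's argument is terser but leans on the unstated fact that $\bar x^{\uparrow}$ determines the cut, including the index $k$.
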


\begin{proof}
If $\bar x$ is a point computed at step \ref{step:lex-min} of Algorithm \ref{alg:weak}, then clearly $\bar x^\uparrow\in S^\uparrow$, as $\bar x \in S$.
Thus we only have to show that if $\bar x$ and $\tilde x$ are points computed at step \ref{step:lex-min} in two consecutive iterations (say iterations $q$ and $q+1$), 
then $\bar x^\uparrow\prec\tilde x^\uparrow$. Assume not. Then $\bar x^\uparrow=\tilde x^\uparrow$ and therefore the cuts introduced at these two iterations would be 
exactly the same. But then the cut generated at iteration $q$ would already cut off $\tilde x$, contradicting the fact that at iteration $q+1$ the point computed at 
step \ref{step:lex-min} is $\tilde x$.
\end{proof}
\begin{corollary}\label{cor:sup}
$|S^{\uparrow}|$ is an upper bound on the number of cuts 
produced by  Algorithm \ref{alg:weak}. 
\end{corollary}
%We remark that in \cite{neto2012simple} the bound on the number of iterations depends both on the feasible set $S$ and on the coefficients of the objective function,
%while our bound depends solely on $S$.

We next construct a convex body containing no integer points for which the bound $|S^{\uparrow}|$ on the number of cuts is exponential and tight.

\begin{proposition}\label{prop:exp}
For every $n\in\N$, there is a convex subset $S$ of $[0,1]^n$ (described by a single convex constraint plus variable bounds) on which Algorithm \ref{alg:weak} 
computes $|S^{\uparrow}|=2^n-1$ cuts.
\end{proposition}

\begin{proof}
We choose the standard basis $\{e^1,\dots,e^n\}$ as lattice basis of $\Z^n$.
Let $\mathbf1$ be the point in $\R^n$ with all entries equal to 1, and let $\|\cdot\|$ denote the Euclidean norm.
Define
\[S:=\left\{x\in[0,1]^n:\left\|x-\frac{\mathbf1}2\right\|^2\le \frac n4-\frac3{16}\right\}.\]
Note that $S\cap\Z^n=\emptyset$ and $\ell_i=0$ for $i=1,\dots,n$.
Furthermore, for every $x\in\{0,1\}^n\setminus\{\mathbf1\}$, $S$ contains the point $z(x)$ obtained from $x$ by setting to $\frac14$ the entry with largest index 
that is 0. As $S\subseteq[0,1]^n$, this shows that $S^{\uparrow}=\{0,1\}^n\setminus \{\mathbf0\}$, and thus $|S^\uparrow|=2^n-1$.

We now show that every point in $S^{\uparrow}$ is of the form $\bar x^\uparrow$ for some point $\bar x$ found in step \ref{step:lex-min}.
Let $\bar x$ be the point computed at some iteration of step \ref{step:lex-min} and assume $\bar x^\uparrow\ne\mathbf1$. 
By Theorem \ref{thm:convex-hull}, 
the lex-cut associated with $\bar x^\uparrow$
is satisfied by all $x\in \{0,1\}^n$ such that $x\succeq \bar x^\uparrow$. As the lex-cut associated with $\bar x^\uparrow$ is an inequality 
with nonnegative coefficients, it is also satisfied by the point $z(\bar x^\uparrow)$. This implies that, if we denote by 
$\tilde x$ the point computed in step 
\ref{step:lex-min} at the next iteration, $\tilde x^\uparrow$ is the lex-min point in $\{0,1\}^n$ that is lexicographically larger 
than $\bar x^\uparrow$. Thus 
every point in $S^{\uparrow}$ is of the form $\bar x^\uparrow$ for some point $\bar x$ found in step \ref{step:lex-min}. 
Together with Observation \ref{obs:performance}, 
this shows that precisely $|S^\uparrow|$ cuts are needed to discover that $S$ contains no integer points, which happens at step 
\ref{step:empty} immediately after the
iteration in which $\bar x^\uparrow=\mathbf1$.
\end{proof}

Corollary \ref{cor:sup} gives a guarantee on the maximum number of iterations of Algorithm \ref{alg:weak}.
One may ask whether there exists an 
enumerative algorithm that achieves the same performance.
We propose Algorithm \ref{alg:enum}, which we think is the best candidate. 

\begin{algorithm}[ht]
\label{alg:enum}
\caption{Resolution of integer linear optimization over $\mathcal S$ via lex-enumeration}
\SetKwInput{Input}{Input}
\SetKwInput{Output}{Output}
  \Input{$S\in\mathcal S$, $c\in\Z^n\setminus\{\mathbf0\}$ with relatively prime entries and a lattice basis $\{c^1,\dots,c^n\}$ of $\Z^n$, with $c^1=c$.}
	\Output{an optimal integer solution $\bar x$ for the problem $\min\{cx:x\in S\}$  or a certificate that $S\cap \Z^n=\emptyset$.}
	Translate $S$ so that $S\subseteq \{x\in \R^n:c^ix\ge 0,\,i=1,\dots, n\}$.  Set $\alpha_1:=\dots :=\alpha_n:=0$ and $i^*:=1$.\\
Let $S^*:=S\cap\{x\in \R^n: c^ix=\alpha_i,\,i<i^*;\:c^ix\ge\alpha_i,\,i\ge i^*\}$\label{recursion}.\\
	If $S^*=\emptyset$:\\
	\qquad If $i^*=1$, stop: $S\cap\Z^n=\emptyset$.\label{step:i^*=1}\\
	\qquad Else update {$i^*:= i^*-1$, $\alpha_{i^*}:=\alpha_{i^*}+1$,  $\alpha_{i}:=0$ for  $i>i^*$, and go to step \ref{recursion}\label{alpha1}}.\\
  Else\\
	\qquad Let $\bar{x}$ be the lex-min point in $S^*$\label{bar x}.\\
	\qquad If $\bar{x}^{\uparrow}\in S^*$, stop:  $\bar{x}^{\uparrow}$ is the lex-min point in $S\cap\Z^n$.\\
  \qquad Else update $i^*:=n$, $\alpha_{i}:=c^i\bar{x}^{\uparrow}$ for $i=1,\dots, n$, and go to step \ref{recursion}\label{alpha2}.
\end{algorithm}

The correctness of this algorithm is based on the following lemma (whose proof also explains how the algorithm works).

\begin{lemma}\label{lem:consecutive}
In Algorithm \ref{alg:enum}, if $\bar x$ and $\tilde x$ denote the points computed at two consecutive executions of line \ref{bar x}, then $\tilde x$ is the
lex-min point in $S$ that is lexicographically larger than $\bar x^\uparrow$.
\end{lemma}

\begin{proof}
Let $\bar x$ denote the point found at some execution of step \ref{bar x}. If $\bar x^\uparrow\notin S^*$, then, at steps \ref{alpha2} and \ref{recursion}, $S^*$ 
is defined as the set of points $x\in S$ satisfying $c^ix=c^i\bar{x}^{\uparrow}$ for all $i\le n-1$ and $c^nx\ge c^n\bar{x}^{\uparrow}$.
If $S^*\ne\emptyset$ then the next execution of step \ref{bar x} yields the lex-min point in $S$ that is lexicographically larger than $\bar x^\uparrow$.
Otherwise, step \ref{alpha1} is executed, which updates $S^*$ to the set of points $x\in S$ satisfying $c^ix=c^i\bar{x}^{\uparrow}$ for all
$i\le n-2$, $c^{n-1}x\ge c^{n-1}\bar{x}^{\uparrow}+1$ and $c^nx\ge0$. Again, if $S^*\ne\emptyset$ then the next execution of step \ref{bar x} 
yields the lex-min point in $S$ that is lexicographically larger than $\bar x^\uparrow$. Otherwise, this point is found after some further executions of step 
\ref{alpha1} (unless the condition in step \ref{step:i^*=1} is satisfied).
\end{proof}

To analyze the performance of Algorithm \ref{alg:enum}, we need the following definitions.
Let $C$ be the $n\times n$ matrix whose rows are $c^1,\dots,c^n$ and let $S\in\mathcal S$ be given.
For every $\bar x \in S^\uparrow$, let $V(\bar x)$ be the set of the following $n$ vectors $\alpha^1,\dots,\alpha^n$: for $k=1,\dots,n-1$, $\alpha^k$ is defined as
\begin{alignat*}4
\alpha^k_i &= c^i\bar{x},&\quad&i=1,\dots,k-1\\
\alpha^k_k &= c^k\bar{x}+1\\
\alpha^k_i &=0, &&i=k+1,\dots,n,
\end{alignat*}
and $\alpha^n=C\bar x$.
Notice that by Lemma \ref{lem:extreme-pts-gen}, $V(\bar x)$ contains all vectors of the form $Cx$ where $x$ is a vertex of $Q(\bar{x})$.

Let $V(S)=\bigcup_{\bar x\in S^\uparrow}V(\bar x)$. Notice that, given $\bar x,\bar y\in S^\uparrow$, the set $V(\bar x)\cap V(\bar y)$ may be nonempty.

\begin{proposition}\label{prop:bound-alg2}
Given a set $S\in \mathcal S$, let $(\alpha)$ be the sequence of vectors used to define the sequence of sets $(S^*)$ in step \ref{recursion} of Algorithm \ref{alg:enum}.
\begin{itemize}
\item If $S\cap\Z^n=\emptyset$, then $(\alpha)$ is the lex-ordering  of all points in $V(S)\cup\{\mathbf0\}$ with respect to the standard basis.
\item If $S\cap\Z^n\ne \emptyset$, the sequence is truncated to the lex-min vector $\alpha$ (with respect to the standard basis) such that $C^{-1}\alpha\in S\cap\Z^n=S\cap S^{\uparrow}$.
\end{itemize}
\end{proposition}

\begin{proof}
Clearly the sequence $(\alpha)$ starts with $\alpha=\mathbf0$ and is lexicographically increasing with respect to the standard basis.

Let $\alpha\ne0$ be a vector used in step \ref{recursion} at some iteration $q>1$ and let $\bar x$ be the last point computed at line \ref{bar x} before iteration $q$; 
say that $\bar x$ is computed at iteration $q'<q$. If $q'=q-1$, then $\alpha=C\bar x^\uparrow$ and therefore $\alpha\in V(S)$. If $q'=q-t$ for some $t>1$, then
line \ref{alpha1} is executed $t-1$ times between iterations $q'$ and $q$. In this case, $\alpha$ is the vector defined by $\alpha_i=c^i\bar x^\uparrow$ for
$i\le n-t$, $\alpha_{n-t+1}=c^{n-t+1}\bar x^\uparrow+1$, $\alpha_i=c^i\bar x^\uparrow$ for $i\ge n-t+2$, and therefore $\alpha \in V(S)$.

We now show that every point in $V(S)$ is in the sequence $(\alpha)$.
By Lemma \ref{lem:consecutive}, the sequence $(\alpha)$ contains all points of the form $Cx$ for $x\in S^\uparrow$.
Let now $\alpha\in V(S)$, where $\alpha$ is not of the form $Cx$ for any $x\in S^\uparrow$. Then there exist $\hat x\in S^\uparrow$ and an index $k<n$ such 
that $\alpha_i=c^i\hat x$ for $i<k$, $\alpha_kx=c^k\hat x+1$, and $\alpha_i=0$ for $i>k$. Consider the last iteration of line \ref{bar x} in which $\bar x^\uparrow$ 
satisfies $c^i\bar x^\uparrow=c^i\hat x$ for $i\le k$ (this definition makes sense because, as shown above, $\hat x=\bar x^\uparrow$ at some iteration of line \ref{bar x}). 
The algorithm now sets $\alpha=C\bar x^\uparrow$ and executes line \ref{alpha1} $k$ consecutive times. After this, we have $\alpha=Cx$. This shows that every point in $V(S)$ 
is in the sequence $(\alpha)$.
\end{proof}

We remark that in the definition of $\alpha$ at line \ref{alpha2}, we could impose the stronger condition $\alpha_n:=c^n\bar x^\uparrow+1$. However, this would not 
change substantially the bounds on the number of iterations shown above. Moreover, when $S$ is convex the number of iterations is precisely the same in both cases.

By Observation \ref{obs:performance} and Proposition \ref{prop:bound-alg2}, the number of iterations of Algorithm \ref{alg:weak} and Algorithm \ref{alg:enum} is 
upper-bounded by $|S^\uparrow|$ and $|V(S)|+1$, respectively. Note that the latter bound is always larger than the former. In particular, for the example in
Proposition \ref{prop:exp} we have $|V(S)|=2^n+2^{n-1}-2$, thus in that case Algorithm \ref{alg:enum} executes roughly 50\% more iterations than Algorithm \ref{alg:weak}.
However comparing the two algorithms by counting the number of iterations may not be ``fair'', as the computational effort varies from iteration to iteration: for instance, 
the computation of a lex-min solution (line \ref{step:lex-min} of Algorithm \ref{alg:weak} and line \ref{bar x} of Algorithm  \ref{alg:enum}) requires up to $n$ oracle calls, 
while the iterations of Algorithm  \ref{alg:enum} in which $S^*$ is empty only require a single oracle call. Nonetheless the results on the number of iterations at least indicate that,
from the theoretical point of view, Algorithm \ref{alg:weak} tends to be more efficient than Algorithm \ref{alg:enum}.

\section{Comparison with Gomory and split cuts}\label{sec:comp}

Given a set $S$, a {\em Chv\'atal--Gomory inequality} for $S$ is a linear inequality
of the form $gx\ge\ceil{\gamma}$ for some $g\in\Z^n$ and $\gamma\in\R$ such that the inequality $gx\ge\gamma$ is valid for $S$.
We call $gx\ge\ceil{\gamma}$ a {\em proper} Chv\'atal--Gomory inequality if $gx\ge\ceil{\gamma}$ is violated by at least one point in $S$.

\begin{proposition}
Given $S\in\mathcal S$, every proper Chv\'atal--Gomory inequality for $S$ is a lex-cut for some lattice basis $\{c^1,\dots,c^n\}$ of $\Z^n$.
\end{proposition}

\begin{proof}
Let $gx\ge\ceil{\gamma}$ be a proper Chv\'atal--Gomory inequality for $S$. Without loss of generality,
we assume that the entries of $g$ are relatively prime integers.
Let $\bar x$ be the lex-min solution found at the first iteration of Algorithm \ref{alg:weak}  with respect to some lattice basis $\{c^1,\dots,c^n\}$, with $c^1=g$.
Since $gx\ge\ceil{\gamma}$ is a proper Chv\'atal--Gomory inequality for $S$, we have $\gamma\le g\bar x<\ceil{\gamma}$. In particular, $g\bar x\notin\Z$.
Then the corresponding lex-cut is (equivalent to) $gx\ge\ceil{g\bar x}=\ceil{\gamma}$.
\end{proof}

The converse of the above proposition is false; this will follow from a stronger result.

A linear inequality is a {\em split cut} for $S$  if there  exist $\pi\in\Z^n$ and $\pi_0\in\Z$ such that 
the inequality is valid  for both  $\{x\in S:\pi x\le\pi_0\}$
and $\{x\in S:\pi x\ge\pi_0+1\}$.
It is known that every Chv\'atal--Gomory inequality is a split cut but not vice versa.

The next result shows that  our family of cuts is not included in and does not include the family of split cuts.
Combined with the previous proposition, this implies that our family of cuts strictly contains the  Chv\'atal--Gomory inequalities.

\begin{proposition}\label{prop:split}
There exist a bounded polyhedron $S$ and a split cut for $S$ that cannot be obtained as (and is not implied by) 
a lex-cut for any choice 
of the lattice basis $\{c^1,\dots,c^n\}$.
Conversely, there exist a bounded polyhedron $S$ and a lex-cut that is not a split cut for $S$.
\end{proposition}

\begin{proof}
Let $S\subseteq\R^2$ be the triangle with vertices $(0,0)$, $(1,0)$ and $(1/2,-1)$. (See Figure \ref{fig:split1} to follow the proof.) 
The inequality $x_2\ge0$ is a 
split cut for $S$, as it is valid for both sets $\{x\in S:x_1\le0\}$ and $\{x\in S:x_1\ge1\}$. Note that after the application of the cut,
the continuous relaxation 
becomes the segment with endpoints $(0,0)$ and $(1,0)$, which is the convex hull of the integer points in $S$.

Assume that the cut $x_2\ge0$ can be obtained via an iteration of Algorithm \ref{alg:weak} for some lattice basis $\{c^1,c^2\}$ and the 
corresponding bounds $\ell_1,\ell_2\in\Z$.
In the following, we will write $c^1=(c^1_1,c^1_2)$ and $c^2=(c^2_1,c^2_2)$.

Recall that in Algorithm \ref{alg:weak} a translation is applied such that $\ell_i=0$ for every $i$. However, in this proof it 
is convenient to work without applying 
the translation. It is easy to see that in this case the form of the lex-cut is still \eqref{eq:cut}, but now the $d^k_i$ are defined as follows:
\[d^k_i:=\begin{cases}
1 & \mbox{if $i=k$}\\
\ceil{c^k\bar{x}-\ell_k} & \mbox{if $i=k-1$},\\
\ceil{c^k\bar{x}-\ell_k}\prod_{j=i+1}^{k-1}(c^j\bar{x}+1-\ell_j),&\mbox{if $i\le k-2$}.
\end{cases}\]

Since the point $(1/2,-1)$ is the only fractional vertex of $S$, we must have $\bar x=(1/2,-1)$, otherwise no cut is generated.
Suppose $k=1$, i.e., $c^1\bar x\notin\Z$ (see step \ref{step:cut} of the algorithm). Then the inequality generated by the algorithm is equivalent to
$c^1x\ge\ceil{c^1\bar x}$. Since this inequality must be equivalent to $x_2\ge0$ and the entries of $c^1$ are relatively prime integers,
we necessarily have $c^1=(0,1)$. But then $c^1\bar x=-1$, a contradiction to the assumption $c^1\bar x\notin\Z$.

Suppose now $k=2$, i.e., $c^1\bar x\in\Z$ and $c^2\bar x\notin\Z$.
Then the inequality given by the algorithm is
\begin{equation}\label{eq:nonsplit}
d^2_1\left(c^1x-c^1\bar x\right)+c^2x-\ceil{c^2\bar x}\ge0.
\end{equation}

We claim that $c^1_1\ne0$. If this is not the case, then $c^1_1=0$ and $c^2_1\ne0$ (as $\{c^1,c^2\}$ is a basis), and inequality \eqref{eq:nonsplit} does not reduce 
to the desired cut $x_2\ge0$, as the coefficient of $x_1$ is $d^2_1c^1_1+c^2_1=c^2_1\ne0$. Thus $c^1_1\ne0$.
This implies that either the point $(0,-1)$ or the point $(1,-1)$ satisfies the strict inequality $c^1x>c^1\bar x$. We assume that this holds for $\hat x:=(0,-1)$ (the
other case is similar). Note that $c^1\hat x\ge c^1\bar x+1$, as $c^1\bar x\in\Z$ and $c^1,\hat x\in\Z^2$.
Furthermore, the slope of the line defined by the equation $c^1x=c^1\bar x$ is positive.

If $c^2\hat x\ge \ell_2$, then $\hat x$ satisfies inequality \eqref{eq:nonsplit}, as $c^1\hat x-c^1\bar x\ge1$ and $c^2\hat x-c^2\bar x\ge \ell_2-c^2\bar x\ge-d^2_1$.
Since the point $(1,0)$ also satisfies \eqref{eq:nonsplit} (as it is an integer point in $S$),
the middle point of $\hat x$ and $(1,0)$ satisfies \eqref{eq:nonsplit}. However, the middle point is $(1/2,-1/2)$, which is in $S$.
This shows that in this case \eqref{eq:nonsplit} is not equivalent to $x_2\ge0$.

Therefore we assume $c^2\hat x<\ell_2$. Since $c^2\bar x\ge \ell_2$, the line defined by
the equation $c^2x=\ell_2$ intersects the line segments $[\hat x,\bar x]$ in a point distinct from $\hat x$.
Then, because $(0,0)$ satisfies the inequality $c^2x\ge \ell_2$ (as it is in $S$), the slope of the line defined by the equation $c^2x=\ell_2$ is negative.
Furthermore, since $c^2,\hat x\in\Z^2$, we have $c^2\hat x\le\floor{\ell_2}$, and thus the line defined by the equation $c^2x=\floor{\ell_2}$ intersects $[\hat x,\bar x]$
in some point $x^*$.

Now consider the system $c^1x=c^1\bar x$, $c^2x=\floor{\ell_2}$. Since the constraint matrix is unimodular (as $\{c^1,c^2\}$ is a
lattice basis of $\Z^2$) and the right-hand sides are integer, the unique solution to this system is an integer point. However,
the first equation defines a line with positive slope containing $\bar x$ and the second equation defines a line with negative slope containing $x^*$.
From this we see that the intersection of the two lines is a point satisfying $0<x_1\le1/2$ and therefore cannot be an integer point, a contradiction.
This concludes the proof that there is a split cut that cannot be obtained via an iteration of Algorithm \ref{alg:weak}.

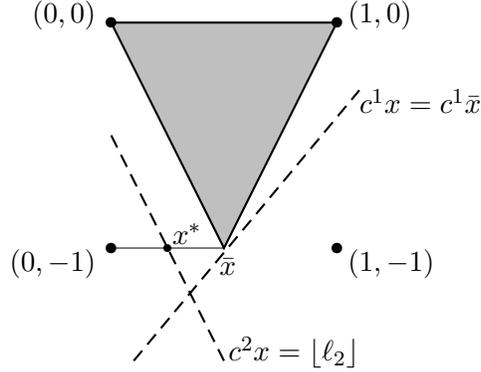
\begin{figure}
\begin{center}
\psset{unit=3cm}
\begin{pspicture}(0,-1.5)(1,0)
\pspolygon[fillstyle=solid,fillcolor=lightgray](0,0)(1,0)(.5,-1)
\put(0,0){\circle*{.05}}
\put(1,0){\circle*{.05}}
\put(0,-1){\circle*{.05}}
\put(1,-1){\circle*{.05}}
\put(-.35,0){$(0,0)$}
\put(1.05,0){$(1,0)$}
\put(-.45,-1.1){$(0,-1)$}
\put(1.05,-1.1){$(1,-1)$}
\put(.48,-1.12){$\bar x$}
\psline[linewidth=.1pt](0,-1)(.5,-1)
\psline[linestyle=dashed](.1,-1.5)(1.1,-.3)
\psline[linestyle=dashed](0,-.5)(.5,-1.5)
\put(.27,-.98){$x^*$}
\put(.25,-1){\circle*{.03}}
\put(1.1,-.4){$c^1x=c^1\bar x$}
\put(.52,-1.5){$c^2x=\floor{\ell_2}$}
\end{pspicture}
\end{center}
\caption{Illustration of the first part of the proof of Proposition \ref{prop:split}. The inequality $x_2\ge0$ is a split cut for the 
shadowed triangle, 
but is not of the type \eqref{eq:cut}.}
\label{fig:split1}
\end{figure}

\medskip
For the converse, let $S\subseteq\R^2$ be the triangle with vertices $(0,3/2)$, $(1/4,0)$ and $(1,0)$.
If we take $c^1,c^2$ to be the vectors in the standard basis of $\R^2$, and $\ell_1=\ell_2=0$,
then  Algorithm \ref{alg:weak} yields the cut $2x_1+x_2\ge2$.
Note that every point in $S$ other than $(1,0)$ is cut off by this inequality.
Thus, if the inequality $2x_1+x_2\ge2$ is a split cut for $S$, then there exist $\pi\in\Z^2$ and $\pi_0\in\Z$ such that $S$ is contained in the ``strip'' $\{x\in\R^2:\pi_0\le\pi x\le\pi_0+1\}$. 
Since $S$ contains a horizontal and a vertical segment of length $3/4$, this is possible only if the Euclidean distance between the lines $\{x\in\R^2:\pi x=\pi_0\}$
and $\{x\in\R^2:\pi x=\pi_0+1\}$ is at least $\frac3{4\sqrt2}$. Therefore $\|\pi\|^2\le\left(\frac{4\sqrt2}3\right)^2=\frac{32}{9}<4$.
Since $\pi$ is an integer vector, we deduce that $\pi_1,\pi_2\in\{0,1,-1\}$. It can be verified that if $|\pi_1|=|\pi_2|=1$ then $S$ is not contained in the strip.
Therefore one entry of $\pi$ is 0 and the other is 1 or $-1$. It can be checked that the only strip of this type containing $S$ is $\{x\in\R^2:0\le x_1\le 1\}$. However, 
the inequality $2x_1+x_2\ge2$ is not valid for all the points in $\{x\in S:x_1\le0\}\cup\{x\in S:x_1\ge1\}$, as the point $(0,3/2)$ is in this set but violates the inequality.
\end{proof}

\section{Concluding remarks}\label{sec:conc}
An obvious variant of Algorithm \ref{alg:weak} is the following: instead of being computed only once at the beginning of the procedure, 
the lower bounds $\ell_i$ can be updated 
at every iteration or whenever it seems convenient. It can be verified that the  bounds of Observation \ref{obs:performance} and 
Proposition \ref{prop:exp} also hold for this 
variant of the algorithm: the proofs are the same.

In view of  Observation \ref{obs:performance} and Proposition \ref{prop:exp}, the cardinality of $S^{\uparrow}$ truncated to the lex-min point 
in $S^{\uparrow}\cap S$ plays a 
crucial role in the performance of Algorithm \ref{alg:weak}. This number is dependent on the choice of the lattice basis and its ordering.
It is easy to see that different choices of the lattice basis (or different choices of the ordering of the elements of the same lattice basis)
may result in a different number of iterations of the algorithm. However, this is not always the case: for instance, in the example in 
Proposition \ref{prop:exp} Algorithm \ref{alg:weak} 
would produce the same number of iterations regardless of the ordering of the standard basis.

A natural question is whether the approach described in this paper can be generalized to the mixed integer case, i.e., 
to problems of the form $\min \{cx+dy: (x,y)\in S\cap(\Z^n\times\R^p)\}$, 
where $S\subseteq\R^{n+p}$ is a compact set.
However, it does not seem that our algorithm can be easily extended to deal with this case.

\section{Acknowledgements}
 The lex-cuts defined in a  previous version of this manuscript were weaker. Giacomo Zambelli suggested to derive stronger 
lex-cuts via the characterization of the polyhedron $Q(\bar{x})$. Section \ref{sec:exp} benefited from remarks due to Stefan Weltge. We thank both of them.
We are also grateful to Akshay Gupte for his constructive comments and his pointers to the existing literature.\medskip

\bibliographystyle{spmpsci} 
\bibliography{MINLPCuts}

\begin{thebibliography}{10}
\providecommand{\url}[1]{{#1}}
\providecommand{\urlprefix}{URL }
\expandafter\ifx\csname urlstyle\endcsname\relax
  \providecommand{\doi}[1]{DOI~\discretionary{}{}{}#1}\else
  \providecommand{\doi}{DOI~\discretionary{}{}{}\begingroup
  \urlstyle{rm}\Url}\fi

\bibitem{andersen2013intersection}
Andersen, K., Jensen, A.N.: Intersection cuts for mixed integer conic quadratic
  sets.
\newblock In: International Conference on Integer Programming and Combinatorial
  Optimization, pp. 37--48 (2013)

\bibitem{armstrong1979page}
Armstrong, R., Charnes, A., Phillips, F.: Page cuts for integer interval linear
  programming.
\newblock Discrete Applied Mathematics \textbf{1}(1-2), 1--14 (1979)

\bibitem{balas1993lift}
Balas, E., Ceria, S., Cornu{\'e}jols, G.: A lift-and-project cutting plane
  algorithm for mixed 0--1 programs.
\newblock Mathematical programming \textbf{58}(1-3), 295--324 (1993)

\bibitem{bell1973cutting}
Bell, D.E.: A cutting plane algorithm for integer programs with an easy proof
  of convergence.
\newblock Working paper 73-15, International Institute for Applied Systems
  Analysis, Laxenburg  (1973)

\bibitem{belotti2013families}
Belotti, P., G{\'o}ez, J.C., P{\'o}lik, I., Ralphs, T.K., Terlaky, T.: On
  families of quadratic surfaces having fixed intersections with two
  hyperplanes.
\newblock Discrete Applied Mathematics \textbf{161}(16-17), 2778--2793 (2013)

\bibitem{belotti:2017}
Belotti, P., G{\'o}ez, J.C., P{\'o}lik, I., Ralphs, T.K., Terlaky, T.: A
  complete characterization of disjunctive conic cuts for mixed integer second
  order cone optimization.
\newblock Discrete Optimization \textbf{24}, 3--31 (2017)

\bibitem{belotti2013mixed}
Belotti, P., Kirches, C., Leyffer, S., Linderoth, J., Luedtke, J., Mahajan, A.:
  Mixed-integer nonlinear optimization.
\newblock Acta Numerica \textbf{22}, 1--131 (2013)

\bibitem{bowman1970finiteness}
Bowman, V.J., Nemhauser, G.L.: A finiteness proof for modified {D}antzig cuts
  in integer programming.
\newblock Naval Research Logistics Quarterly \textbf{17}(3), 309--313 (1970)

\bibitem{burer:2017}
Burer, S., K{\i}l{\i}n{\c{c}}-Karzan, F.: How to convexify the intersection of
  a second order cone and a nonconvex quadratic.
\newblock Mathematical Programming \textbf{162}, 393--429 (2017)

\bibitem{burer-letchford}
Burer, S., Letchford, A.: Non-convex mixed-integer nonlinear programming: A
  survey.
\newblock Surveys in Operations Research and Management Science \textbf{17},
  97--106 (2012)

\bibitem{ceria1999convex}
Ceria, S., Soares, J.: Convex programming for disjunctive convex optimization.
\newblock Mathematical Programming \textbf{86}(3), 595--614 (1999)

\bibitem{dadush:2014}
Dadush, D., Dey, S.S., Vielma, J.P.: On the {C}hv{\'a}tal--{G}omory closure of
  a compact convex set.
\newblock Mathematical Programming \textbf{145}, 327--348 (2014)

\bibitem{frangioni2006}
Frangioni, A., Gentile, C.: Perspective cuts for a class of convex 0--1 mixed
  integer programs.
\newblock Mathematical Programming \textbf{106}(2), 225--236 (2006)

\bibitem{gomory:1958}
Gomory, R.: {Outline of an algorithm for integer solutions to linear programs}.
\newblock Bull. Amer. Math. Soc. \textbf{64}(5), 275--278 (1958)

\bibitem{gomory:1963}
Gomory, R.: An algorithm for integer solutions to linear programs.
\newblock In: P.W. R.L.~Graves (ed.) Recent Advances in Mathematical
  Programming. McGraw-Hill (1963)

\bibitem{gupte2016}
Gupte, A.: Convex hulls of superincreasing knapsacks and lexicographic
  orderings.
\newblock Discrete Applied Mathematics \textbf{201}, 150--163 (2016)

\bibitem{he2017another}
He, Q., Lee, J.: Another pedagogy for pure-integer {G}omory.
\newblock RAIRO-Operations Research \textbf{51}(1), 189--197 (2017)

\bibitem{Lee2017}
Lee, J., Wiegele, A.: Another pedagogy for mixed-integer gomory.
\newblock EURO Journal on Computational Optimization \textbf{5}(4), 455--466
  (2017)

\bibitem{modaresi2016intersection}
Modaresi, S., K{\i}l{\i}n{\c{c}}, M.R., Vielma, J.P.: Intersection cuts for
  nonlinear integer programming: Convexification techniques for structured
  sets.
\newblock Mathematical Programming \textbf{155}(1-2), 575--611 (2016)

\bibitem{neto2012simple}
Neto, J.: A simple finite cutting plane algorithm for integer programs.
\newblock Operations Research Letters \textbf{40}(6), 578--580 (2012)

\bibitem{orlin1985finitely}
Orlin, J.B.: A finitely converging cutting plane technique.
\newblock Operations Research Letters \textbf{4}(1), 1--3 (1985)

\bibitem{SchrijverBOOK}
Schrijver, A.: Theory of Linear and Integer Programming.
\newblock Wiley-Interscience Series in Discrete Mathematics. John Wiley \& Sons
  Ltd. (1986)

\bibitem{stubbs:1999}
Stubbs, R.A., Mehrotra, S.: A branch-and-cut method for 0-1 mixed convex
  programming.
\newblock Mathematical programming \textbf{86}(3), 515--532 (1999)

\end{thebibliography}

\end{document}